\definecolor{webgreen}{rgb}{0,.5,0}
\definecolor{webbrown}{rgb}{.6,0,0}
\newcommand{\seqnum}[1]{\href{http://oeis.org/#1}{\underline{#1}}}
\begin{document}

\theoremstyle{plain}
\newtheorem{theorem}{Theorem}
\newtheorem{corollary}[theorem]{Corollary}
\newtheorem{lemma}[theorem]{Lemma}
\newtheorem{proposition}[theorem]{Proposition}

\theoremstyle{definition}
\newtheorem{definition}[theorem]{Definition}
\newtheorem{example}[theorem]{Example}
\newtheorem{conjecture}[theorem]{Conjecture}

\theoremstyle{remark}
\newtheorem{remark}[theorem]{Remark}

\begin{center}
\vskip 1cm{\LARGE\bf A Variation on Mills-Like \\
\vskip .1in Prime-Representing Functions
}
\vskip 1cm
\large
L\'aszl\'o T\'oth\\
Rue des Tanneurs 7 \\
L-6790 Grevenmacher \\
Grand Duchy of Luxembourg \\
\href{mailto:uk.laszlo.toth@gmail.com}{\tt uk.laszlo.toth@gmail.com}
\end{center}

\vskip .2 in

\begin{abstract}
Mills showed that there exists a constant $A$ such that
$\lfloor{A^{3^n}}\rfloor$ is prime for every positive integer $n$.
Kuipers and Ansari generalized this result to $\lfloor{A^{c^n}}\rfloor$
where $c\in\mathbb{R}$ and $c\geq 2.106$. The main contribution of this paper
is a proof that the function $\lceil{B^{c^n}}\rceil$ is also a
prime-representing function, where $\lceil X\rceil$ denotes the ceiling
or least integer function.
Moreover, the first 10 primes in
the sequence generated in the case $c=3$ are calculated. Lastly, the
value of $B$ is approximated to the first $5500$ digits and is shown to
begin with $1.2405547052\ldots$.
\end{abstract}

\section{Introduction}
Mills \cite{Mills47} showed in 1947 that there exists a constant $A$ 
such that $\lfloor{A^{3^n}}\rfloor$ is prime for all positive integers $n$. Kuipers \cite{Kuipers50} and Ansari \cite{Ansari51} generalized this result to all $\lfloor{A^{c^n}}\rfloor$ where $c\in\mathbb{R}, c\geq2.106$, i.e., there exist infinitely many $A$'s such 
that the above expression yields a prime for all positive integers $n$. Caldwell and Cheng \cite{CaldwellCheng05} calculated the minimum constant $A$ for the case $c=3$ up to the first $6850$ digits (\seqnum{A051021}), and found it to be approximately equal to $1.3063778838\ldots$. This process involved computing the first $10$ primes $b_i$ in the sequence generated by the function (\seqnum{A051254}), with $b_{10}$ having 6854 decimal digits.

The main contribution of this paper is a proof that the function $\lceil{B^{c^n}}\rceil$ satisfies the same criteria, where $\lceil X\rceil$ denotes the 
ceiling function (the least integer greater than or equal to $X$).
In other words, there exists a constant $B$ 
such that for all positive integers $n$, the expression
$\lceil{B^{c^n}}\rceil$ yields a prime for $c\geq 3, c\in\mathbb{N}$. Moreover, the sequence of primes generated by such functions is monotonically increasing. Lastly, analogously to \cite{CaldwellCheng05} the case $c=3$ is studied in more detail and the value of $B$ is approximated up to the first $5500$ decimal digits by calculating the first $10$ primes $b_i$ of the sequence. 

In contrast to Mills' formula and given that here the floor function is
replaced by a ceiling function, the process of generating the prime
number sequence $P_0, P_1, P_2, \ldots$ involves taking the greatest
prime smaller than $P_n^c$ at each step instead of smallest prime
greater than $P_n^c$,  in order to find $P_{n+1}$. As a consequence,
the sequence of primes generated by $\lceil{B^{c^n}}\rceil$ is
different from the one generated by $\lfloor{A^{c^n}}\rfloor$ for the
same value of $c$ and the same starting prime (apart from the first
element of course).

\section{The prime-representing function}

This paper begins with a proof of the case $c=3$ and will proceed to a generalization of the function to all $c\geq 3, c\in\mathbb{N}$.

By using Ingham's result \cite{Ingham37} on the difference of consecutive primes:
$$
p_{n+1} - p_n < Kp_n^{5/8},
$$
and analogously to Mills' reasoning \cite{Mills47}, we construct an infinite sequence of primes $P_0, P_1, P_2, \ldots$ such that $\forall n \in \mathbb{N} : (P_n-1)^3+1 < P_{n+1} < P_n^3$ using the following lemma.

\begin{lemma}\label{bounds}
$\forall N > K^8+1 \in \mathbb{N} : \exists p \in \mathbb{P} : (N-1)^3+1<p<N^3$, where $\mathbb{P}$ denotes the set of prime numbers.
\end{lemma}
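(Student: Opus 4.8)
The plan is to apply Ingham's bound to the largest prime lying below the upper endpoint $N^3$ and to show that, once $N$ is large enough, this prime still lies above the lower endpoint $(N-1)^3+1$. Since $N>K^8+1>1$ and $N\in\mathbb{N}$ we have $N\ge 2$, so $N^3\ge 8$ and there is a largest prime $p$ with $p<N^3$; writing $q$ for the next prime, maximality of $p$ forces $q\ge N^3$. Feeding $p=p_n$, $q=p_{n+1}$ into $q-p<Kp^{5/8}$ and using $p<N^3$ (so that $p^{5/8}<(N^3)^{5/8}=N^{15/8}$) gives
\[
N^3\le q< p+Kp^{5/8}< p+KN^{15/8},
\]
hence $p>N^3-KN^{15/8}$. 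Because $p<N^3$ holds automatically, it remains only to establish $N^3-KN^{15/8}\ge(N-1)^3+1$; the strict inequalities in the chain above then deliver a prime $p$ with $(N-1)^3+1<p<N^3$, which is exactly the assertion.

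Next I would reduce this remaining inequality to something transparent. Expanding $(N-1)^3$ gives $N^3-(N-1)^3-1=3N^2-3N=3N(N-1)$, so the goal becomes $KN^{15/8}\le 3N(N-1)$. For $N\ge 2$ one checks $3N(N-1)-N^2=N(2N-3)>0$, so $3N(N-1)>N^2$, and it therefore suffices to prove $KN^{15/8}\le N^2$, i.e.\ $K\le N^{1/8}$, i.e.\ $K^8\le N$. This is precisely what the hypothesis $N>K^8+1$ guarantees (with room to spare), and $N>K^8+1$ is also what gave $N\ge 2$, so the argument closes.

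The single place that wants a moment's care is the crude estimate $(N-1)^3+1<N^3$ used to pass from $p^{5/8}$ to $N^{15/8}$, but this is immediate since $N^3-(N-1)^3-1=3N(N-1)>0$ for $N\ge 2$. Beyond that the proof is purely elementary manipulation of inequalities once Ingham's theorem is invoked; there is no real obstacle, only bookkeeping, and the only genuine choice is how generous to be with the constants — the clean threshold $N>K^8+1$ leaves a large margin, since in fact any $N$ somewhat larger than $(2K/3)^8$ would already suffice.
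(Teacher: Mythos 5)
Your proof is correct, and it uses the same essential ingredient as the paper (Ingham's bound $p_{n+1}-p_n<Kp_n^{5/8}$ together with $N-1>K^8$), but anchored at the opposite end of the interval: you take the largest prime $p$ below $N^3$ and push it down past $(N-1)^3+1$, whereas the paper takes the prime immediately above $(N-1)^3$ and shows it stays below $N^3$ via $(N-1)^3+K(N-1)^{15/8}<(N-1)^3+(N-1)^2<N^3$. The two computations are mirror images and of equal difficulty, but your version has two small advantages. First, the paper's choice forces it to deal separately with the endpoint: it must observe that $(N-1)^3+1=N(N^2-3N+3)$ is composite in order to upgrade $p_{n+1}>(N-1)^3$ to $p_{n+1}>(N-1)^3+1$ (and implicitly that $(N-1)^3$, being a cube, is not prime); your lower bound $p>N^3-KN^{15/8}>(N-1)^3+1$ comes out strictly greater than the lower endpoint directly from the inequalities, with no compositeness argument needed. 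Second, the prime your argument exhibits --- the greatest prime below $N^3$ --- is exactly the one the paper later uses in its numerical construction of the sequence $P_0,P_1,\ldots$, so your proof matches the algorithm more closely. One cosmetic quibble: your closing remark attributes the bound $p^{5/8}<N^{15/8}$ to the estimate $(N-1)^3+1<N^3$, but that step only uses $p<N^3$, which holds by the definition of $p$; the inequality $(N-1)^3+1<N^3$ is what makes the target interval nonempty and is subsumed in the main computation anyway.
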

 
\begin{proof}
Let $p_{n}$ be the greatest prime smaller than $(N-1)^3$.

\begin{align*}
(N-1)^3 & < p_{n+1}\\
   	    & < p_n + Kp_n^{5/8} \\
        & < (N-1)^3 + K\left((N-1)^3\right)^{5/8} && (\text{since} \ p_n < (N-1)^3) \\
        & < (N-1)^3 + (N-1)^2 					  && (\text{since} \ N > K^8 +1) \\
        & < N^3 - 2N^2 + N \\
        & < N^3.
\end{align*}

Note that since $(N-1)^3 < p_{n+1}$, $(N-1)^3+1 < p_{n+1}$ since $(N-1)^3+1 = N(N^2-3N+3)$ is not prime.

\end{proof}

Given the above we can construct an infinite sequence of primes $P_0, P_1, P_2, \ldots$ such that for every positive integer $n$, we have: $(P_n-1)^3+1 < P_{n+1} < P_n^3$.

We now define the following two functions:

\begin{align*}
\forall n \in \mathbb{Z^+}: u_n &= (P_n-1)^{3^{-n}}, \\
\forall n \in \mathbb{Z^+}: v_n &= P_n^{3^{-n}}.
\end{align*}

The following statements can immediately be deduced:

\begin{itemize}
\item $u_n < v_n$,
\item $u_{n+1} = (P_{n+1}-1)^{3^{-n-1}} > \left((P_n-1)^3+1)-1\right)^{3^{-n-1}} = (P_n-1)^{3{-n}} = u_n$,
\item $v_{n+1} = P_{n+1}^{3^{-n-1}} < (P_n^3)^{3^{-n-1}} = P_n^{3^{-n}} = v_n$.
\end{itemize} 

It follows that $u_n$ forms a bounded and monotone increasing sequence.

\begin{theorem} \label{theorem3n}
There exists a positive real constant $B$ such that $\lceil{B^{3^n}}\rceil$ is a prime-representing function for all positive integers $n$.
\end{theorem}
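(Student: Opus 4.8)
The plan is to mirror Mills' original argument, but using the ceiling function and the sequence $u_n$ instead of the floor function and $v_n$. First I would set $B := \lim_{n\to\infty} u_n$, which exists because the previous paragraph established that $(u_n)$ is monotone increasing and bounded above (by any $v_m$, since $u_n < v_n \le v_m$ for $n \ge m$). Since $u_{n+1} > u_n$ for all $n$, we have $u_n < B$ for every $n$, and since $B \le v_n$ for every $n$ (as $B$ is the supremum of the $u_n$ and each $v_n$ is an upper bound), we get the two-sided bound $u_n < B \le v_n$.

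Next I would raise this to the power $3^n$, which is legitimate and order-preserving since all quantities are positive reals greater than $1$ and $3^n > 0$. From $u_n < B \le v_n$ we obtain
\begin{equation*}
(P_n - 1) < B^{3^n} \le P_n.
\end{equation*}
The right-hand inequality $B^{3^n} \le P_n$ together with the left-hand strict inequality $P_n - 1 < B^{3^n}$ pins $B^{3^n}$ into the half-open interval $(P_n-1, P_n]$. The ceiling of any real number in $(P_n - 1, P_n]$ is exactly $P_n$, so $\lceil B^{3^n} \rceil = P_n$, which is prime by construction. This holds for every positive integer $n$, proving the claim.

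The one subtlety I would be careful about is the right endpoint: I need $B \le v_n$ rather than $B < v_n$, because $B$ is only the supremum and could in principle equal some $v_n$; fortunately $\lceil x \rceil = P_n$ still holds at $x = P_n$, so the closed endpoint causes no trouble (this is exactly why the ceiling version uses the half-open interval $(P_n-1,P_n]$ where Mills' floor version uses $[P_n, P_{n+1})$-style reasoning with $v_n$). I should also double check that $B > 1$ so that $x \mapsto x^{3^n}$ is genuinely increasing and the powers behave as expected — this follows since $u_1 = P_1 - 1 \ge 1$ once the starting prime $P_1 \ge 2$, and in fact we may take $P_0$ large enough (larger than $K^8+1$) that Lemma~\ref{bounds} applies at every stage, which also forces $B > 1$ strictly. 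There is essentially no hard step here; the entire content is the construction of the sequence $(P_n)$ via Lemma~\ref{bounds} and the monotonicity observations, both already in hand, so the proof is a short limiting argument. The only thing to state explicitly is that the half-open interval $(P_n-1, P_n]$ contains exactly one integer, namely $P_n$, and that the non-primality remark in Lemma~\ref{bounds} guaranteeing $P_{n+1} \ne (P_n-1)^3 + 1$ is what makes the strict inequality $u_{n+1} > u_n$ valid.
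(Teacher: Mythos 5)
Your proposal is correct and follows essentially the same route as the paper: define $B := \lim_{n\to\infty} u_n$, use the monotonicity of $u_n$ and $v_n$ to trap $B$ between them, and raise to the power $3^n$ to get $P_n - 1 < B^{3^n} \le P_n$, whence the ceiling is $P_n$. You are in fact slightly more careful than the paper at the right endpoint (using $B \le v_n$ and observing that the ceiling still returns $P_n$ there), which is a harmless refinement of the same argument.
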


\begin{proof}
Since $u_n$ is bounded and strictly monotone, there exists a number $B$ such that
$$
B := \lim_{n\rightarrow\infty}u_n.
$$

From the above deduced properties of $u_n$ and $v_n$, we have

\begin{alignat*}{2}
u_n &< B &&< v_n, \\
(P_n-1)^{3^{-n}} &< B &&< P_n^{3^{-n}}, \\
P_{n}-1 &< B^{3^n} &&< P_n.
\end{alignat*}

\end{proof}

\begin{theorem}
There exists a positive real constant $B$ such that $\lceil{B^{c^n}}\rceil$ is a prime-representing function for $c\geq 3, c\in\mathbb{N}$ and all positive integers $n$.
\end{theorem}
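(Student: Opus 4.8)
The plan is to mirror the structure of the $c=3$ argument almost verbatim, replacing the exponent $3$ by an arbitrary integer $c \geq 3$ throughout. The first step is to prove the analogue of Lemma~\ref{bounds}: for all sufficiently large $N \in \mathbb{N}$ there exists a prime $p$ with $(N-1)^c + 1 < p < N^c$. As before, one lets $p_n$ be the greatest prime below $(N-1)^c$ and applies Ingham's bound $p_{n+1} - p_n < K p_n^{5/8}$ to get $p_{n+1} < (N-1)^c + K\left((N-1)^c\right)^{5/8}$. Since $c \geq 3$ forces $(c)(5/8) \leq c - 1$ (indeed $5c/8 \leq c-1 \iff c \geq 8/3$), the error term $K(N-1)^{5c/8}$ is eventually dominated by $(N-1)^{c-1}$, so for $N$ large enough $p_{n+1} < (N-1)^c + (N-1)^{c-1} < N^c$ by the binomial expansion $N^c = (N-1)^c + c(N-1)^{c-1} + \cdots$. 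The ``$+1$'' refinement needs $(N-1)^c + 1$ to be composite: here $(N-1)^c + 1$ is divisible by $(N-1)+1 = N$ whenever $c$ is odd, but for $c$ even this factorization fails, so one instead notes simply that $p_{n+1} \geq (N-1)^c + 2$ already (a prime strictly greater than $(N-1)^c$ is at least $(N-1)^c + 1$, and if it equals $(N-1)^c+1$ one shifts attention to the next prime, which is still $< N^c$ with room to spare), or one absorbs the off-by-one into the large-$N$ slack. I expect this parity issue to be the one genuinely new wrinkle relative to the $c=3$ case.

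Next I would, exactly as in the text, build an infinite sequence of primes $P_0, P_1, P_2, \ldots$ with $(P_n - 1)^c + 1 < P_{n+1} < P_n^c$ for every $n$, starting from any prime $P_0$ past the threshold of the generalized lemma. Then define $u_n = (P_n - 1)^{c^{-n}}$ and $v_n = P_n^{c^{-n}}$. The three bullet-point deductions go through identically: $u_n < v_n$ trivially; $u_{n+1} = (P_{n+1}-1)^{c^{-n-1}} > \left((P_n-1)^c + 1 - 1\right)^{c^{-n-1}} = (P_n-1)^{c^{-n}} = u_n$; and $v_{n+1} = P_{n+1}^{c^{-n-1}} < (P_n^c)^{c^{-n-1}} = P_n^{c^{-n}} = v_n$. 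Hence $(u_n)$ is increasing and bounded above (by $v_1$, say), so $B := \lim_{n\to\infty} u_n$ exists.

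Finally, the monotonicity and the nesting $u_n \leq B \leq v_n$ give, after raising to the power $c^n$, the chain $P_n - 1 \leq B^{c^n} \leq P_n$; and since $P_n - 1 < B^{c^n}$ strictly (because $u_n < u_{n+1} \leq B$ gives strict inequality) while $B^{c^n} \leq v_n^{c^n} = P_n$ (in fact $B^{c^n} < P_n$ by the strict decrease of $v_n$), we conclude $P_n - 1 < B^{c^n} < P_n$, so $\lceil B^{c^n} \rceil = P_n$ is prime for every positive integer $n$. This is the same endgame as in Theorem~\ref{theorem3n}. The only step requiring real care is the generalized Lemma~\ref{bounds}, specifically checking that the Ingham exponent $5/8$ is small enough that $5c/8 \leq c - 1$ for all $c \geq 3$ (it is, with equality never attained) and handling the compositeness/off-by-one point uniformly in $c$; everything else is a routine substitution of $c$ for $3$.
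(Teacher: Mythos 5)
Your proof is correct and reaches the conclusion by essentially the same route as the paper, but the two differ in how they generalize Lemma~\ref{bounds}, and you repair a point the paper passes over. The paper follows Kuipers: it sets $a=3c-4$, $b=3c-1$, uses the weakened Ingham bound $p_{n+1}-p_n < K'p_n^{a/b}$ (valid since $a/b\geq 5/8$), and exploits the identity $ca+1=b(c-1)$ to get the clean threshold $N>K'^b+1$. You instead keep the original exponent $5/8$ and observe that $5c/8 < c-1$ for every integer $c\geq 3$, so $K(N-1)^{5c/8}$ is eventually dominated by $(N-1)^{c-1}$; this is simpler and equally valid (with explicit threshold $N>K^{8/(3c-8)}+1$ if one wants it). More substantively, you correctly flag that the compositeness of $(N-1)^c+1$ --- which in the $c=3$ case comes from the factorization $(N-1)^3+1=N(N^2-3N+3)$ --- generalizes only to odd $c$: for even $c$ the number $(N-1)^c+1$ need not be composite (it can be a generalized Fermat prime), so the strict inequality $(N-1)^c+1<p$ needs an extra word. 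Your fix (if the first prime above $(N-1)^c$ is exactly $(N-1)^c+1$, pass to the next prime, which the same gap estimate still places below $N^c$) works, whereas the paper merely asserts the bounds ``analogously'' and leaves this case unaddressed. The strictness genuinely matters: $P_{n+1}>(P_n-1)^c+1$ is what makes $u_n$ strictly increasing and hence $P_n-1<B^{c^n}$, without which $\lceil B^{c^n}\rceil$ could return the non-prime $P_n-1$. Your endgame is identical to that of Theorem~\ref{theorem3n}.
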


\begin{proof}

We can use the generalizations to Mills' function as shown by Kuipers \cite{Kuipers50} and Dudley \cite{Dudley69} in order to show that $\lceil{B^{c^n}}\rceil$ is also a prime-representing function for $c\geq 3, c\in\mathbb{N}$. This proof is short as it is essentially identical to the one presented above, with the following modifications.

As shown by Kuipers \cite{Kuipers50} for Mills' function, we first define $a=3c-4, b=3c-1$. Therefore $a/b\geq 5/8$. This means that in Ingham's equation there exists a constant $K'$ such that
$$
p_{n+1} - p_n < K'p_n^{a/b}.
$$

Lemma \ref{bounds} can then be modified by taking $N>K'^b+1$, defining $p_n$ as the greatest prime smaller than $(N-1)^c$ and noticing that $ca+1 = b(c-1)$. Analogously to the proof in Lemma \ref{bounds}, we quickly obtain the bounds $(N-1)^c+1<p<N^c$. This means that we can construct a sequence of primes $P_0, P_1, P_2, \ldots$ such that for every positive integer $n$, $(P_n-1)^c+1 < P_{n+1} < P_n^c$.

This is then concluded with a similar reasoning as in the proof of Theorem \ref{theorem3n}.

\end{proof}

\section{Numerical calculation of $B$}

In this section, a numerical approximation of $B$ is presented for the case $c=3$. Mills \cite{Mills47} suggested using the lower bound $K=8$ for the first prime in the classical Mills function $\lfloor{A^{3^n}}\rfloor$, where $K$ is the constant defined in Ingham's paper \cite{Ingham37}. Other authors, including Caldwell and Cheng \cite{CaldwellCheng05}, decided to begin with the prime $2$ and then choose the least possible prime at each step. In this case, since the ceiling function replaces the floor function, we choose the greatest possible prime smaller than $P_n^3$ as the next element $P_{n+1}$.

If $p_i$ denotes the $i^{\rm th}$ prime in the sequence, we obtain

\begin{itemize}
\item $p_1 = 2$
\item $p_2 = 7$
\item $p_3 = 337$
\item $p_4 = 38272739$
\item $p_5 = 56062005704198360319209$
\item $p_6 = 17619999581432728735667120910458586439705503907211069\backslash\\
6028654438846269$
\item $p_7 = 54703823381492990628407924713718713957740513297193414\backslash\\
21259587335767096542227048457036456872683352033529421007878\backslash\\
29141860830768725102385452609882503551811073140339908096068\backslash\\
8125590506176016285837338837682469$
\end{itemize}

The primes $p_8$, $p_9$ and $p_{10}$ are far too large to show in this paper --- for instance $p_{10}$ has 5528 decimal digits. The primes $p_i$ for $i\leq8$ were verified using a deterministic primality test in Wolfram Mathematica 11 with the \texttt{ProvablePrimeQ} function in the PrimalityProving package, while $p_9$ and $p_{10}$ were certified prime by the Primo software \cite{Primo}. The certification of $p_{10}$ took $14$ hours and $23$ minutes on an Intel i7-4770 CPU and 4GB RAM. The prime certificates for $p_9$ and $p_{10}$ as well as the primes themselves can be found alongside this paper as auxiliary files.

The value of $B$ was calculated up to its first $5500$ decimal digits. The first $600$ are presented below:

\begin{center}
\begin{tabular}{rllll}
$1.2405547052$& $5201424067$& $4695153379$& $0034521235$& $3396725255$ \\
  $9232034386$& $1886622104$& $9111642316$& $9209174137$& $7064313608$ \\ 
  $3109555650$& $9480848158$& $9481662421$& $8378961303$& $7426392535$ \\
  $6658242301$& $8524802142$& $1960037621$& $1464734105$& $8229918628$ \\
  $4182439221$& $9437396337$& $9442594273$& $8936874985$& $9158491115$ \\
  $7886891108$& $4262398559$& $2731605607$& $5719554304$& $2915944781$ \\
  $6278755834$& $4774412491$& $8125993063$& $4590081972$& $8945860313$ \\ 
  $1303247244$& $0907981721$& $7119324606$& $1009855753$& $6063847008$ \\
  $6985820925$& $6038920740$& $0817313213$& $1691077511$& $3322609476$ \\ 
  $3239264899$& $5703729933$& $8452155290$& $5152647430$& $8960522935$ \\
  $3735771869$& $0936560934$& $8000430515$& $4856069064$& $6309177739$ \\
  $2832001365$& $6550953673$& $1549789328$& $9032942357$& $7708168137$
\end{tabular}
\end{center}

\bigskip
\hrule
\bigskip

\noindent 2010 {\it Mathematics Subject Classification}: Primary 11A41; Secondary 11Y60, 11Y11. \\
\noindent \emph{Keywords: } prime-representing function, Mills' constant, prime number sequence.

\bigskip
\hrule
\bigskip

\noindent (Concerned with sequences
\seqnum{A051021} and
\seqnum{A051254}.)

\bigskip
\hrule
\bigskip

\vspace*{+.1in}
\noindent
Received  June 8 2017;
revised versions received  September 20 2017; September 26 2017.
Published in {\it Journal of Integer Sequences}, October 29 2017.

\bigskip
\hrule
\bigskip

\noindent
Return to
\htmladdnormallink{Journal of Integer Sequences home page}{http://www.cs.uwaterloo.ca/journals/JIS/}.
\vskip .1in

\end{document}